\theoremstyle{plain}
\newtheorem{thm}{Theorem}
\newtheorem{prop}{Proposition}[section]
\newtheorem{cor}[prop]{Corollary}
\newtheorem{defi}[prop]{Definition}
\newtheorem{rmk}[prop]{Remark}
\newcommand {\R} {\mathbb{R}} 
 \newcommand {\N} {\mathbb{N}}
\newcommand {\p} {\partial}
\newcommand {\D} {\Delta}
\begin{document}

\title[On Runge approximation for a finite-dimensional Schrödinger inverse problems]{ On Runge approximation and Lipschitz stability for a finite-dimensional Schrödinger inverse problem}

\author{Angkana Rüland}
\address{Max-Planck Institute for Mathematics in the Sciences, Inselstraße 22, 04103 Leipzig, Germany}
\email{rueland@mis.mpg.de}

\author{Eva Sincich}
\address{Dipartimento di Matematica e Geoscienze
Università degli Studi di Trieste,
via Valerio 12/1, 34127 Trieste
Italy}
\email{esincich@units.it}

\maketitle

\centerline{\textit{Dedicated to Sergio Vessella on the  occasion of his
$65^{th}$ birthday}}

\begin{abstract}
In this note we reprove the Lipschitz stability for the inverse problem for the Schrödinger operator with finite-dimensional potentials by using quantitative Runge approximation results. This provides a quantification of the Schrödinger version of the argument from \cite{KV85} and presents a slight variant of the strategy considered in \cite{AdHGS17} which may prove useful also in the context of more general operators.
\end{abstract}

\section{Introduction}
\label{introduction}

In this note we reprove the Lipschitz stability for the inverse problem for the Schrödinger operator
\begin{align}
\label{eq:L}
L_q= -\D + q
\end{align}
 with a piecewise linear potential $q$ satisfying a suitable spectral condition (see \eqref{spectral} and Remark \ref{rmk:Robin_to_Dirichlet} below). This had previously been derived in \cite{AdHGS17} by means of singular solutions and quantitative unique continuation estimates. In our version of the Lipschitz stability proof, we split the stability problem into two clearly separated steps: 
\begin{itemize}
\item[(i)] \emph{A boundary recovery result for which we rely on the argument from \cite{AdHGS17} }. 
This a typical initial step when proving stability for inverse boundary value problem (see for instance \cite{A1,Sy-U-2,Brown}).
\item[(ii)] \emph{A quantitative Runge approximation result for which we rely on a slight variant of the argument from \cite{RS17c} adapted to the present geometry.} This and related quantitative Runge approximation results hold for very general operators (involving for instance variable coefficients and lower order drift terms). Provided that boundary recovery results are available for these operators (which would allow to apply step (i)), it is thus possible to carry out our strategy of deducing Lipschitz estimates also for more general operators.
\end{itemize}

While we hope that the ideas which are used in this article will be useful also in more general settings, for simplicity of presentation, in this note we restrict our attention to the Schrödinger setting with linear potentials. 

Let us put this into a context. As it is well-known that both the Calder\'on problem and the inverse problem for the Schrödinger operator are highly unstable and thus pose major difficulties for instance for numerics, in \cite{AV} Alessandrini and Vessella posed the question of the existence of quantities or settings in which improvements in terms of stability are possible. As discovered in \cite{AV}, considering potentials or conductivities in certain \emph{finite-dimensional} spaces provides such a scenario. In order to deduce this, the argument from \cite{AV} relied on a combination of singular solutions  \cite{A1} (see also the method of localized potentials \cite{G08, HPS17} providing concentrated information as well) and unique continuation estimates (see for instance \cite{ARRV09}). Building on this observation, a tremendous amount of activity has revolved around extending this to more complex equations and systems (see for instance \cite{BF11, AdHGS16, GS15, AdHGS17, BdHQ13, AS18, H19, H19a} for only some of the activities in this area). 

Let us also recall the alternative approach  proposed by Bacchelli and Vessella in \cite{BaV06} for proving a Lipschitz stability estimate for an unknown polygonal boundary. This provides a quite general procedure to deal with stability for nonlinear and finite-dimensional inverse problems.  Indeed, their argument shows that if for a nonlinear inverse problem a global and constructive,
although very weak, stability estimate is available and also one has
local Lipschitz stability, with a known constant and known radius of
validity, then also global Lipschitz stability holds and the Lipschitz
constant can be concretely evaluated. Other papers, as for instance \cite{BdHFV15,BdHFVZ17}, were influenced by this argument. 

Instead of directly combining singular solutions with unique continuation results as previously done in the literature, in this article we present a slight modification of this approach. It will allow us to clearly isolate the two steps (i), (ii) from above. A similar approach had already been introduced in the context of stability estimates for an inverse Schrödinger equation with partial data in which the potential is known in a neighbourhood of the boundary (see \cite{RS17c}) and in a non-local analogue of the problem that is investigated here \cite{RS18}. We hope that the splitting of the problem into the two steps (i), (ii) adds clarity to the structure of our argument and of similar proofs, and that it will be of use in more general settings.

\section{Conditional Lipschitz Stability - Assumptions and the Result}\label{subsection assumptions}
In this section, we present the assumptions under which we will be working in the sequel and state our main result.
\subsection{Notation and definitions}\label{subsec notation and definitions}

In several places within this manuscript it will be useful to single out one coordinate
direction. To this purpose, the following notation for
points $x\in \mathbb{R}^n$ will be adopted. For $n\geq 3$,
a point $x\in \mathbb{R}^n$ will be denoted by
$x=(x',x_n)$, where $x'\in\mathbb{R}^{n-1}$ and $x_n\in\mathbb{R}$.
Moreover, given a point $x\in \mathbb{R}^n$,
we will denote with $B_r(x), B_r'(x)$ the open balls in
$\mathbb{R}^{n},\mathbb{R}^{n-1}$ respectively centered at $x$ with radius $r$
and by $Q_r(x)$ the cylinder

\[Q_r(x)=B_r'(x')\times(x_n-r,x_n+r).\]




\begin{defi}\label{def Lipschitz boundary}
Let $\Omega$ be a domain in $\mathbb R^n$. We say that a portion
$\Sigma$ of $\partial\Omega$ is of Lipschitz class with constants
$r_0,L$ if for any $P\in\Sigma$ there exists a rigid
transformation of $\mathbb R^n$ under which we have $P=0$ and
$$\Omega\cap Q_{r_0}=\{x\in Q_{r_0}\,:\,x_n>\varphi(x')\},$$
where $\varphi$ is a Lipschitz function on $B'_{r_0}$ satisfying

\[\varphi(0)=0;\qquad
\|\varphi\|_{C^{0,1}(B'_{r_0})}\leq Lr_0.\]

In the sequel, it is understood that $\partial\Omega$ is of Lipschitz class with
constants $r_0,L$ as a special case of $\Sigma$, with
$\Sigma=\partial\Omega$.
\end{defi}

\begin{defi}\label{flat portion}
Let $\Omega$ be a domain in $\mathbb R^n$. We say that a portion $\Sigma$ of
$\partial\Omega$ is a flat portion of size $r_0$
if for any $P\in\Sigma$ there exists a rigid transformation of
$\mathbb R^n$ under which we have $P=0$ and


\begin{align}
\begin{split}
\Sigma\cap{Q}_{r_{0}/3} &=\{x\in
Q_{r_0/3}|x_n=0\},\\
\Omega\cap {Q}_{r_{0}/3} &=\{x\in
Q_{r_0/3}|x_n>0\},\\
\left(\mathbb{R}^{n}\setminus\Omega\right)\cap {Q}_{r_{0}/3} &= \{x\in
Q_{r_0/3}|x_n<0\},
\end{split}
\end{align}
\end{defi}



\begin{defi}\label{DN}
Let $\Omega$ be a domain in $\mathbb{R}^n$ with Lipschitz boundary
$\partial\Omega$ and $\Sigma$ an open non-empty (flat) open portion of
$\partial\Omega$. Let us introduce $H^{\frac{1}{2}}_{00}(\Sigma)$ (\cite{LiM}, {Chapter $1$}),  the subspace of
$H^{\frac{1}{2}}(\partial\Omega)$ which is the closure of 
\begin{equation}\label{Hco}
H^{\frac{1}{2}}_{co}(\Sigma)=\big\{f\in
H^{\frac{1}{2}}(\partial\Omega) \:\vert\:\textnormal{supp}
\:f\subset\Sigma\big\}
\end{equation}
in the norm of $H^{\frac{1}{2}}(\partial\Omega)$. The dual  space of $H^{\frac{1}{2}}_{00}(\Sigma)$ will be denoted as ${H}^{-\frac{1}{2}}_{00}(\Sigma)$.

Let $q\in L^{\infty}(\Omega)$ and  assume that $0$ is not an eigenvalue of $(-\Delta + q)$ with Dirichlet boundary conditions in $\Omega$, i.e., 
\begin{eqnarray}
\{u\in H^1_0(\Omega): (-\Delta +q) u=0) \} =\{ 0\},
\end{eqnarray}
then the local Dirichlet-to-Neumann map associated to $q$ and
$\Sigma$ is the operator
\begin{equation}\label{mappaDN}
\Lambda_{q}^{\Sigma}:H^{\frac{1}{2}}_{00}(\Sigma)\longrightarrow
{H}^{-\frac{1}{2}}_{00}(\Sigma),
\end{equation}
 defined by
\begin{equation}\label{def DN locale}
<\Lambda_{q}^{\Sigma}\:g,\:f>\:=\:\int_{\:\Omega}
\nabla u \cdot \nabla  {v} + q u  {v}\:dx,
\end{equation}
for any $g$, $f\in H^{\frac{1}{2}}_{00}(\Sigma)$, where
$u\in{H}^{1}(\Omega)$ is the weak solution to
\begin{displaymath}
\left\{ \begin{array}{ll} (-\Delta + q(x))u=0, &
\textrm{$\textnormal{in}\quad\Omega$},\\
u=g, & \textrm{$\textnormal{on}\quad{\partial\Omega},$}
\end{array} \right.
\end{displaymath}
and $v\in H^{1}(\Omega)$ is any function such that
$v\vert_{\partial\Omega}=f$ in the trace sense. Here we
denote by $<\cdot,\:\cdot>$ the $L^{2}(\partial\Omega)$-pairing
between $H^{\frac{1}{2}}_{00}(\Sigma)$ and its dual
$H^{-\frac{1}{2}}_{00}(\Sigma)$.
\end{defi}



\subsection{Assumptions about the domain $\Omega$}\label{subsec assumption domain}


We assume that $\Omega$ is a domain in $\mathbb{R}^n$
and that there is a positive constant $B$ such that

\begin{equation}\label{assumption Omega}
|\Omega|\leq B r_0 ^n,
\end{equation}
where $|\Omega|$ denotes the Lebesgue measure of $\Omega$.


We fix an open non-empty subset $\Sigma$ of $\partial\Omega$
(where the measurements in terms of  the local Dirichlet-to-Neumann map are taken). We consider
 \[\bar\Omega = \bigcup_{j=1}^{N}\bar{D}_j,\]
where $D_j$, $j=1,\dots , N$ are known open sets of
$\mathbb{R}^n$, satisfying the conditions (1)-(3) below.

\begin{enumerate}
\item $D_j$, $j=1,\dots , N$ are connected and pairwise
nonoverlapping domains.

\item $\partial{D}_j$, $j=1,\dots , N$ are of Lipschitz class with
constants $r_0$, $L$.

\item There exists one region, say $D_1$, such that
$\partial{D}_1\cap\Sigma$ contains a  flat portion
$\Sigma_1$ of size $r_0$ and for every $i\in\{2,\dots , N\}$ there exists $j_1,\dots, 
j_K\in\{1,\dots , N\}$ such that

\begin{equation}\label{catena dominii}
D_{j_1}=D_1,\qquad D_{j_K}=D_i,
\end{equation}
and such that 
\begin{eqnarray}
\mathop {\left( \bigcup_{k=1}^l \overline{D_{j_k}}\right)}\limits^ \circ \ \ \ \mbox{and} \ \ \ \mathop {\left( \Omega \setminus \bigcup_{k=1}^ l \overline{D_{j_k}}\right)}\limits^ \circ \ , \ \ l=1,\dots, K
\end{eqnarray}
are Lipschitz domains. 
In addition we assume that, for every $k=1,\dots , K$,
$\partial{D}_{j_k}\cap \partial{D}_{j_{k-1}}$ contains a flat 
portion $\Sigma_k$ of size $r_0$ (here we agree that
$D_{j_0}=\mathbb{R}^n\setminus\Omega$), such that

\[\Sigma_k\subset\Omega,\quad\mbox{for\:every}\:k=2,\dots , K\]

\end{enumerate}
and for any $k=1,\dots,K$, there exists $P_k\in \Sigma_k$ and a rigid transformation of coordinates  under which we have $P_k=0$ and 
\begin{align*}
\Sigma_k\cap Q_{{r_0}/3} &=\{x\in Q_{{r_0}/3} : x_n=0 \},\\
 D_{j_k}\cap Q_{{r_0}/3} &=\{x\in Q_{{r_0}/3} : x_n>0 \},\\
 D_{j_{k-1}}\cap Q_{{r_0}/3} &=\{x\in Q_{{r_0}/3} : x_n<0 \}. 
\end{align*}

\subsection{A-priori information on the potential $q$}\label{subsec assumption potential}

We shall consider a real valued function $q\in L^{\infty}(\Omega)$, with
\begin{equation}\label{apriori q}
||q||_{L^{\infty}(\Omega)}\leq E_0,
\end{equation}
for some positive constant $E_0$ and of type
\begin{subequations}
\begin{eqnarray}\label{a priori info su q}
& &q(x)=\sum_{j=1}^{N}q^{j}(x)\chi_{D_j}(x),\qquad
x\in\Omega,\label{potential 1a}\\
& & q^{j}(x)=a^j+A^j\cdot x\label{potential 2a},
\end{eqnarray}
\end{subequations}
where $a^j\in\mathbb{R}$, $A^j\in\mathbb{R}^n$ and $D_j$, $j=1,\dots ,
N$ are the given subdomains introduced in section \ref{subsec assumption
domain}.

Let $B$, $N$, $r_0$, $L$, $E_0$ be given
positive numbers with $N\in\mathbb{N}$. We will
refer to this set of numbers, along with the space dimension $n$,
as to the \textit{a-priori data}. Several constants depending on the \textit{a-priori data} will appear within the paper. In order to simplify our notation, any quantity denoted by $C,\tilde{C},c_1,c_2, \dots$ will be called a \emph{constant} understanding in most cases that it only depends on the a-priori data.

Observe that the class of functions of the form \eqref{potential 1a} - \eqref{potential 2a} is a finite-dimensional linear space. The $L^{\infty}$-norm $||q||_{L^{\infty}(\Omega)}$ is equivalent to the norm
\[||| q|||=\textnormal{max}_{j=1,\dots , N}\left\{|a^j|+|A^j|\right\}\]
modulo constants which only depend on the a-priori data.


\subsection{Normalization and spectral assumptions}
In the sequel, without loss of generality, we make a few normalization assumptions and introduce the precise spectral conditions on which our result is based.

First, let $K\in \{ 1,\dots,N \}$ be such that 

\begin{eqnarray}
E=\|q_1-q_2\|_{L^{\infty}(\Omega)}=\|q_1-q_2\|_{L^{\infty}(D_K)} ,
\end{eqnarray}
and recall that there exists $j_1,\dots, j_K \in \{1,\dots N\}$ such that 
\begin{eqnarray}
D_{j_1}=D_1, \dots, D_{j_K}=D_K \ .
\end{eqnarray}
With no loss of generality, we may rearrange the indices of these subdomains so that the above mentioned chain is simply denoted by $D_1,\dots, D_K, K\le N$. We also introduce the following sets
\begin{eqnarray}
{\mathcal{W}}_k= \mathop {\left( \bigcup_{i=1}^k\overline{D_{i}}\right)}\limits^ \circ  \ , \ \ \ {\mathcal{U}}_k=\Omega \setminus \overline{{\mathcal{W}}_k}. 
\end{eqnarray}

We require further spectral conditions which could however be relaxed at the expense of passing from the Dirichlet-to-Neumann to the complex Robin-to-Dirichlet map (see Remark \ref{rmk:Robin_to_Dirichlet}). More precisely, we assume that
 $0$ is not an eigenvalue of $(-\Delta + q)$ with Dirichlet boundary conditions in ${\mathcal{U}}_k$, i.e., 
\begin{eqnarray}\label{spectral}
\{u\in H^1_0({\mathcal{U}}_k): (-\Delta +q) u=0 \} =\{ 0\} \  \ \mbox{for any} \ \ k=0, \dots, K .
\end{eqnarray}
Moreover, on the geometry of the sets $\Sigma_k$ we additionally require that
\begin{align}
\label{eq:boundary_restrict}
\begin{split}
\Sigma_k &\subset \partial \mathcal{U}_{k-1} \setminus \overline{\partial \mathcal{U}_{k}},\ 
\Sigma_{k+1} \subset \partial \mathcal{U}_{k} \setminus \overline{\partial \mathcal{U}_{k-1}}.
\end{split}
\end{align}

Analogously as in \eqref{def DN locale}, for each $k\in\{1,\dots,N\}$ we introduce the following Dirichlet-to-Neumann maps 

\begin{eqnarray}
\Lambda_{q_i}^{\Sigma_{k+1}} : H_{00}^{\frac{1}{2}}(\Sigma_{k+1}) \rightarrow H_{00}^{-\frac{1}{2}}(\Sigma_{k+1})
\end{eqnarray}
for the domain ${\mathcal{U}}_k$ relative to potentials $q_i$ and localized on $\Sigma_{k+1}$, for $i=1,2$.

In addition we set ${\mathcal{U}}_0=\Omega$ and $\Lambda_{q_i}^{\Sigma_{1}}=\Lambda_{q_i}^{\Sigma}$.

\subsection{The main result}

Under the explained conditions, we reprove the Lipschitz stability of the finite-dimensional Schrödinger inverse problem:

\begin{thm}
\label{thm:Lip_Runge}
Let $\Omega \subset \R^n$ with $n\geq 3$,  $D_1,\dots,D_N$ and $\Sigma$ satisfy the assumptions from above.
Let $q_1,q_2 \in L^{\infty}(\Omega)$ be two potentials satisfying \eqref{apriori q} and the spectral conditions \eqref{spectral} (below) for $q=q_i$ for $i=1,2$. Moreover, let  $q_1,q_2 \in L^{\infty}(\Omega)$ be of the type
 \begin{subequations}
\begin{eqnarray}\label{a priori info su q}
& &q_i(x)=\sum_{j=1}^{N}{q_i}^{j}(x)\chi_{D_j}(x),\qquad
x\in\Omega,\label{potential 1}\\
& & {q_i}^{j}(x)=a^j+A^j\cdot x\label{potential 2},
\end{eqnarray}
\end{subequations}
where $a^j\in\mathbb{R}$, $A^j\in\mathbb{R}^n$, then there exists a constant $C_N>0$ depending on the a-priori data only  such that
\begin{align*}
\|q_1-q_2\|_{L^{\infty}(\Omega)} \leq C_N \|\Lambda_{q_1}^{\Sigma} - \Lambda_{q_2}^{\Sigma}\|_{*}.
\end{align*} 
Moreover, the dependence on $N$ for the constant $C_N>0$ can be explicitly estimated: $C_N\leq f\circ f \circ ... \circ f(C)$, where $f(t) = \exp(C_0t^{\mu})$ for some constants $C_0>0$, $C>0$, $\mu>0$ which are  independent of $N$. The concatenation of the functions $f$ can be at most $N$-fold.
\end{thm}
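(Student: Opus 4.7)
The plan is to prove the theorem by induction on $k=1,\ldots,K$, peeling off one layer $D_k$ at a time. Since the rearrangement gives $\|q_1-q_2\|_{L^\infty(\Omega)}=\|q_1-q_2\|_{L^\infty(D_K)}$, controlling the $K$-th layer suffices. Write $\delta=\|\Lambda_{q_1}^{\Sigma}-\Lambda_{q_2}^{\Sigma}\|_*$. The base case $k=1$ follows directly from the boundary recovery result of \cite{AdHGS17} applied on the flat portion $\Sigma_1\subset\partial\Omega\cap\Sigma$: since $q_1,q_2$ are affine on $D_1$ (determined by $n+1$ parameters), one obtains a Lipschitz bound $\|q_1-q_2\|_{L^\infty(D_1)}\leq C\delta=:\eta_1$, with $C$ depending only on the a-priori data.

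For the inductive step, assume that $\|q_1-q_2\|_{L^\infty(D_j)}\leq\eta_j$ has been controlled for $j<k$, and set $\mathcal V=\mathcal U_{k-1}$. The objective is to control the local DN maps $\Lambda_{q_i}^{\Sigma_k}$ on $\mathcal V$ relative to the interior flat interface $\Sigma_k\subset\partial\mathcal V$, so that boundary recovery applied at $\Sigma_k$ yields a bound on $q_1-q_2$ in $D_k$. The tool is Alessandrini's identity combined with a quantitative Runge approximation argument in the spirit of \cite{RS17c}, adapted here to an interior interface. Given any test solutions $\tilde u_i$ to $(-\Delta+q_i)\tilde u_i=0$ on $\mathcal V$ with Dirichlet data supported in $\Sigma_k$, Runge approximation provides solutions $u_i$ to $(-\Delta+q_i)u_i=0$ on $\Omega$ with Dirichlet data in $\Sigma$ satisfying $\|u_i-\tilde u_i\|_{L^2(\mathcal V)}\leq\epsilon$ and an energy bound $\|u_i|_\Sigma\|_{H^{1/2}_{00}(\Sigma)}\leq\exp(C_1\epsilon^{-\mu})$. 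Splitting Alessandrini's identity on $\Omega$ as $\int_{\mathcal W_{k-1}}+\int_{\mathcal V}$, dominating the contribution on $\mathcal W_{k-1}$ by $\eta_{k-1}$ times the energies via the inductive hypothesis, and optimizing in $\epsilon$ to balance $\exp(C_1\epsilon^{-\mu})(\delta+\eta_{k-1})$ against $\epsilon$, one derives a logarithmic-type estimate $\|\Lambda_{q_1}^{\Sigma_k}-\Lambda_{q_2}^{\Sigma_k}\|_*\leq C(\log(1/(\delta+\eta_{k-1})))^{-1/\mu}$. A second application of boundary recovery, now at the flat interface $\Sigma_k$ between $D_{k-1}$ and $D_k$, translates this into a logarithmic-type bound on $\|q_1-q_2\|_{L^\infty(D_k)}$.

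The final upgrade from logarithmic to Lipschitz stability exploits the finite-dimensional structure of the admissible potentials together with the a-priori bound $E_0$: for $\delta+\eta_{k-1}$ below a threshold $\delta_0$ the logarithmic estimate is nontrivial, whereas for larger values the trivial bound $\|q_1-q_2\|_{L^\infty(D_k)}\leq 2E_0\leq(2E_0/\delta_0)(\delta+\eta_{k-1})$ suffices; this produces $\eta_k\leq\exp(C_0\,C_{k-1}^{\mu})(\delta+\eta_{k-1})$, and iterating at most $K\leq N$ times yields precisely the tower of exponentials claimed in the statement. The main obstacle is the quantitative Runge approximation step, which requires duality combined with propagation of smallness (Carleman or three-ball type estimates) in the geometry of the interior interface $\Sigma_k$, and is the sole source of the logarithmic loss at each stage. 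A secondary technical point, not present in \cite{RS17c} where the potentials agree near the boundary, is that $q_1-q_2$ does not vanish outside $\mathcal V$; this forces a careful splitting of Alessandrini's identity and the absorption of the residual on $\mathcal W_{k-1}$ through the inductive hypothesis, with constants tracked so as to depend only on the a-priori data.
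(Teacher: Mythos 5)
There is a genuine gap at the heart of your inductive step: the passage ``optimize in $\epsilon$ to get a logarithmic estimate, then upgrade from logarithmic to Lipschitz using the a-priori bound and a threshold $\delta_0$'' does not work. A bound of the form $\|q_1-q_2\|_{L^{\infty}(D_k)}\leq C\left(\log\frac{1}{\delta+\eta_{k-1}}\right)^{-1/\mu}$ is \emph{weaker} than a linear bound precisely in the regime of small $\delta+\eta_{k-1}$, which is the only regime that matters; the trivial estimate $\|q_1-q_2\|_{L^{\infty}}\leq 2E_0\leq (2E_0/\delta_0)(\delta+\eta_{k-1})$ only helps for \emph{large} data, so the case distinction cannot produce $\eta_k\leq \exp(C_0 C_{k-1}^{\mu})(\delta+\eta_{k-1})$. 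Once you have optimized $\epsilon$ against $e^{C\epsilon^{-\mu}}(\delta+\eta_{k-1})$ you have irrevocably introduced a logarithmic modulus, and no finite-dimensionality argument recovers Lipschitz afterwards (the finite-dimensional information has already been spent in the boundary determination step). The actual mechanism in the paper is different: the Runge approximation error enters as $C\epsilon E$, i.e.\ proportionally to the unknown $E=\|q_1-q_2\|_{L^{\infty}(\Omega)}$ itself, and it is \emph{absorbed} rather than optimized. Working backwards from the layer $D_K$ where the maximum $E$ is attained, the Hölder boundary estimate together with the dichotomy $E\geq \delta_K$ or $E<\delta_K$ first gives $E\leq (2C)^{1/\eta}\delta_K$; inserting this into the interface-propagation inequality \eqref{Runge+Bdry} and choosing $\epsilon$ equal to a \emph{fixed constant} (depending only on the constants accumulated so far, not on $\delta$) absorbs the $C\epsilon E$ term, and the iteration down to $\Sigma_1=\Sigma$ stays Lipschitz, with the constants compounding into the tower of exponentials $f\circ\cdots\circ f(C)$, $f(t)=\exp(C_0 t^{\mu})$.

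Two further points where your scheme departs from what can actually be proved. First, your base case is not available: boundary recovery for the Schrödinger potential is only of Hölder type, $\|q_1-q_2\|_{L^{\infty}(D_1)}\leq C(\delta+E)^{1-\eta}\delta^{\eta}$, with the $E$-dependent prefactor; a direct Lipschitz bound $\|q_1-q_2\|_{L^{\infty}(D_1)}\leq C\delta$ does not ``follow directly'', and this $E$-dependence is exactly why the paper organizes the argument around the global maximum $E$ attained on $D_K$ rather than around per-layer quantities $\eta_j$ in a forward induction. Second, your one-shot approximation of solutions on $\mathcal{U}_{k-1}$ by solutions on all of $\Omega$, with the contribution on $\mathcal{W}_{k-1}$ controlled by the inductive hypothesis, produces an error term $\eta_{k-1}e^{C\epsilon^{-\mu}}$ which is not of the absorbable form $\epsilon E$; the paper instead approximates step by step (solutions on $\mathcal{U}_k$ with data on $\Sigma_{k+1}$ by solutions on $\mathcal{U}_{k-1}$ with data on $\Sigma_k$), so that at each stage the only non-DN-map error terms are $C\epsilon E$ and the $D_k$-contribution handled by Corollary \ref{prop:boundary_det_finite}. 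As written, your proposal would at best yield a logarithmic, not Lipschitz, stability estimate.
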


\vskip 0.2cm

\begin{rmk}
\label{rmk:spec}
Let us comment on the spectral conditions. Since by the variational characterisation for Lipschitz domains the eigenvalues of the operator $-\D +q$ depend continuously on the domain (see for instance \cite{BV65, F99}), and by the eigenvalue monotonicity, we obtain that zero as a Dirichlet eigenvalue is not stable under domain perturbation. In this sense, generically, zero is not a Dirichlet eigenvalue for a given operator and (a slight variation) of a domain. Hence, while imposing this condition here, generically, this is not the case anyway.
\end{rmk}

\begin{rmk}
\label{rmk:Robin_to_Dirichlet}
In contrast to \cite{AdHGS17}, as already observed, we have imposed spectral conditions. 
However, we remark that with an argument along the same lines as the present one, it would also have been possible to deal with the Robin-to-Dirichlet map with local complex Robin condition instead. 
This has the advantage that in this case \emph{no} spectral conditions have to be imposed as the underlying Robin boundary value problem is well posed \cite{BamDu87} (see also \cite[Section 3]{AdHGS17} in which the associated Green's function with local complex Robin condition is constructed and estimated).
\end{rmk}

\begin{rmk}
\label{rmk:functions} 
Although we here focus on the piecewise affine functions defined in \eqref{potential 1}, \eqref{potential 2}, we observe that our strategy can be applied to a much larger class of potentials. In fact, any space of linearly independent functions $\psi_1,\dots, \psi_m$ such that also the restrictions
\begin{align}
\label{eq:lin_ind}
\psi_1|_{\Sigma_k},\dots, \psi_m|_{\Sigma_k}
\end{align} 
are linearly independent can be chosen. Further variations of this are possible: If boundary recovery results for the normal derivative are available, it for instance suffices to prove that for some choice of $\gamma_j \in \{0,1\}$ the functions
\begin{align}
\label{eq:lin_ind_a}
\p_{\nu}^{\gamma_1}\psi_1|_{\Sigma_k},\dots, \p_{\nu}^{\gamma_m}\psi_m|_{\Sigma_k}
\end{align} 
are linearly independent. This allows us to recover the coefficients of $\psi(x) = \sum\limits_{j=1}^{m}a_m \psi_m(x)$ from the boundary measurements. Analogous remarks hold, if higher normal derivatives can be recovered on the boundary.
\end{rmk}

\section{Two Ingredients: Stability at the Boundary and Runge Approximation}\label{stability at the boundary}

In the following we collect our two main ingredients in the Lipschitz stability proof. Here we discuss the boundary stability result and the quantitative Runge approximation.

\subsection{Stability at the boundary}
We recall the first of our two main ingredients, namely the stability of the potential $q$ on the boundary which is contained in the proof of Theorem 2.2 in  \cite{AdHGS17}. Contrary to  the inverse conductivity problem for which it is well-known that the stability at the boundary for the conductivity coefficient is of Lipschitz type \cite{Sy-U-2, A1}, for  the potential $q$ the stability is of  H\"older type. 

\begin{thm}[Theorem 2.2  in \cite{AdHGS17}]
\label{prop:boundary_det}
Let $L_{q_1}, L_{q_2}$ be the Schrödinger operators from above with potentials $q_1,q_2\in L^{\infty}(\Omega)$. Then there exist constants $C>1$, $\eta\in (0,1)$, depending on the a-priori data only, such that
\begin{align*}
&\|q_1-q_2\|_{L^{\infty}(\Sigma_k \cap B_{r_0/4}(P_k) )} +|\partial_{\nu}(q_1-q_2)(P_k) | \\
& \leq C
(\|\Lambda_{q_1}^{\Sigma_k}-\Lambda_{q_2}^{\Sigma_k}\|_{\ast} + \|q_1-q_2\|_{L^{\infty}(\Omega)})^{1-\eta}{\|\Lambda_{q_1}^{\Sigma_k}-\Lambda_{q_2}^{\Sigma_k}\|^{\eta}_{\ast} }.
\end{align*}
\end{thm}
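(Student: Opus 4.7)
The plan is to follow the classical Alessandrini scheme for boundary determination, adapted to the Schrödinger operator and the geometry of the interfaces $\Sigma_k$, precisely as carried out in \cite{AdHGS17}. Since the statement concerns the local Dirichlet-to-Neumann map on the subdomain $\mathcal{U}_{k-1}$ whose boundary contains the flat portion $\Sigma_k$, all of what follows is performed on $\mathcal{U}_{k-1}$.

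First, I would record the Alessandrini identity: for any $g,h \in H^{1/2}_{00}(\Sigma_k)$ and any weak solutions $u_i \in H^1(\mathcal{U}_{k-1})$ of $L_{q_i} u_i = 0$ with $u_1|_{\partial \mathcal{U}_{k-1}} = g$, $u_2|_{\partial \mathcal{U}_{k-1}} = h$, one has
\begin{equation*}
\langle (\Lambda_{q_1}^{\Sigma_k} - \Lambda_{q_2}^{\Sigma_k}) g, h\rangle = \int_{\mathcal{U}_{k-1}} (q_1 - q_2) u_1 u_2 \, dx.
\end{equation*}
Next, I would construct a family of concentrated singular solutions near $P_k$. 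Concretely, I would use the local Green's function $G_i(\cdot,y)$ for $L_{q_i}$ on $\mathcal{U}_{k-1}$, with pole at $y$ chosen in the exterior of $\mathcal{U}_{k-1}$ along the inner normal to $\Sigma_k$ at $P_k$ and at distance $d>0$ from $P_k$, together with a cut-off to ensure that the boundary trace is supported in $\Sigma_k \cap B_{r_0/4}(P_k)$. Near the pole, $G_i(x,y) = c_n|x-y|^{2-n} + R_i(x,y)$ with a remainder $R_i$ whose size and regularity are controlled by $\|q_i\|_{L^\infty}$ through the resolvent construction in \cite[Section~3]{AdHGS17}.

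Testing the Alessandrini identity with $u_1 = G_1(\cdot, y)$ and $u_2 = G_2(\cdot, y)$ and isolating the leading singular contribution, the dominant term becomes
\begin{equation*}
(q_1-q_2)(P_k)\int_{\mathcal{U}_{k-1}} |x-y|^{2(2-n)} \,\chi(x)\,dx,
\end{equation*}
which, after a rescaling by $d$, is of order $d^{4-n}$ (in the critical dimension $n=3$ of order $d$, and for $n\geq 4$ diverging as $d\to 0$). The error terms come from three sources: the deviation $(q_1-q_2)(x) - (q_1-q_2)(P_k)$, the lower order corrections $R_i$, and the contribution from the boundary cut-off controlled by $\|\Lambda_{q_1}^{\Sigma_k} - \Lambda_{q_2}^{\Sigma_k}\|_{\ast}$ times a negative power of $d$ encoding the trace norm of the singular datum. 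A careful bookkeeping then yields an inequality of the schematic form
\begin{equation*}
|(q_1-q_2)(P_k)| \leq C\bigl( d^{-\alpha}\|\Lambda_{q_1}^{\Sigma_k} - \Lambda_{q_2}^{\Sigma_k}\|_{\ast} + d^{\beta}\|q_1-q_2\|_{L^\infty(\Omega)}\bigr),
\end{equation*}
for suitable $\alpha,\beta>0$. Optimising in $d$ gives the Hölder exponent $\eta = \beta/(\alpha+\beta)$, and a covering of $\Sigma_k \cap B_{r_0/4}(P_k)$ by points of the form $P_k'$ promotes the pointwise bound to the $L^\infty$ bound stated in the theorem. The $|\partial_\nu(q_1-q_2)(P_k)|$ term is extracted by differentiating the singular solutions in $y$ once along the normal direction, which replaces the $|x-y|^{2-n}$ kernel by its normal derivative and extracts the next coefficient in the Taylor expansion of $q_1-q_2$ at $P_k$; the same optimisation in $d$ then yields the same Hölder exponent.

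The main obstacle I expect is the clean construction and fine pointwise bounds of $G_i(\cdot,y)$ up to $\Sigma_k$ under the mere $L^\infty$-regularity of $q_i$, including quantitative control of the remainder $R_i$ and its normal derivative in $y$; this is the technical heart of the argument, and at this point I would defer entirely to the Green's function estimates already established in \cite[Section~3]{AdHGS17}, from which the quoted Hölder bound follows by the scheme outlined above.
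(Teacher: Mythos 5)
This theorem is not proved in the paper at all: it is imported verbatim as Theorem 2.2 of \cite{AdHGS17}, and your outline (Alessandrini's identity tested on Green's functions with poles approaching $P_k$ along the normal, extraction of the affine coefficients of $q_1-q_2$ on $D_k$, optimisation in the pole distance, with the technical Green's-function estimates deferred to \cite[Section 3]{AdHGS17}) is essentially the singular-solutions strategy of that reference, so the approach is the expected one. Only a minor bookkeeping caveat: for $n=3$ the integral $\int_{\mathcal{U}_{k-1}\cap B_\rho(P_k)}|x-y|^{2(2-n)}\,dx$ is bounded above and below by constants independent of $d$ (and behaves like $\log(1/d)$ for $n=4$), not like $d^{4-n}$ as you state, but since the leading term is still bounded below this does not affect the viability of the scheme you defer to \cite{AdHGS17} to implement.
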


Moreover, by using our a-priori assumption on the finite-dimensional feature of the potentials from Theorem \ref{prop:boundary_det}, we can immediately deduce the following estimate in the interior. 

\begin{cor}
\label{prop:boundary_det_finite}
Let the hypotheses of Theorem \ref {thm:Lip_Runge} be satisfied. 
Then there exist constants $C>1$, $\eta\in (0,1)$, depending on the a-priori data only, such that
\begin{align}
\label{eq:boundary_det}
\|q_1-q_2\|_{L^{\infty}(D_k)}  \leq C
(\|\Lambda_{q_1}^{\Sigma_k}-\Lambda_{q_2}^{\Sigma_k}\|_{\ast} + \|q_1-q_2\|_{L^{\infty}(\Omega)})^{1-\eta}{\|\Lambda_{q_1}^{\Sigma_k}-\Lambda_{q_2}^{\Sigma_k}\|^{\eta}_{\ast} }.
\end{align}

\end{cor}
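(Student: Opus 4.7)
The key observation is that on each subdomain $D_k$, the restriction $(q_1-q_2)|_{D_k}$ is an affine function of the form $(a^k_1-a^k_2) + (A^k_1-A^k_2)\cdot x$, living in the finite-dimensional space of affine functions on $D_k$, whose dimension is $n+1$. Since all norms on a finite-dimensional vector space are equivalent, the idea is to bound the $L^{\infty}(D_k)$-norm by a suitable collection of point-evaluation type functionals and then match those functionals to exactly the quantities controlled by Theorem \ref{prop:boundary_det}.

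The plan is to work in the local coordinates near $P_k \in \Sigma_k$ given in the assumption on the domain, so that $\Sigma_k \cap Q_{r_0/3}$ lies in $\{x_n = 0\}$ with $P_k = 0$. Writing $f := (q_1-q_2)|_{D_k}$ in the form $f(x) = \alpha + \beta' \cdot x' + \beta_n x_n$ with $\alpha \in \R$, $\beta' \in \R^{n-1}$, $\beta_n \in \R$, one sees that the restriction $f|_{\Sigma_k \cap B_{r_0/4}(P_k)}$ is the affine function $x' \mapsto \alpha + \beta'\cdot x'$ on an $(n-1)$-dimensional ball, which determines $\alpha$ and $\beta'$, while $\partial_{\nu}f(P_k) = \pm \beta_n$ determines $\beta_n$. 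Hence the linear map
\[
\Phi : (\alpha,\beta',\beta_n) \longmapsto \bigl( f|_{\Sigma_k \cap B_{r_0/4}(P_k)},\ \partial_{\nu}f(P_k)\bigr)
\]
is an injective linear map between finite-dimensional normed spaces, so equivalence of norms yields a constant $C$ depending only on the a-priori data (in particular on $r_0$ and $L$) such that
\[
\|f\|_{L^{\infty}(D_k)} \leq C\bigl( \|f\|_{L^{\infty}(\Sigma_k\cap B_{r_0/4}(P_k))} + |\partial_{\nu}f(P_k)|\bigr).
\]
Here $\|f\|_{L^{\infty}(D_k)}$ is controlled using $\diam(D_k) \leq C r_0$ from the a-priori bound $|\Omega|\leq B r_0^n$ together with the uniform bounds on the affine coefficients.

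The final step is to insert Theorem \ref{prop:boundary_det} applied with the choice $\Sigma = \Sigma_k$, $q=q_i$ for $i=1,2$, which bounds exactly the right-hand side above by
\[
C\bigl(\|\Lambda_{q_1}^{\Sigma_k}-\Lambda_{q_2}^{\Sigma_k}\|_{\ast} + \|q_1-q_2\|_{L^{\infty}(\Omega)}\bigr)^{1-\eta}\|\Lambda_{q_1}^{\Sigma_k}-\Lambda_{q_2}^{\Sigma_k}\|^{\eta}_{\ast}.
\]
Composing these two estimates and absorbing the constants into the single a-priori constant $C$ yields \eqref{eq:boundary_det}. The verification of the spectral hypotheses of Theorem \ref{prop:boundary_det} on each $\mathcal{U}_{k-1}$ is guaranteed by \eqref{spectral}, so the application is legitimate.

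No serious obstacle is expected: the whole point is that we are exploiting finite-dimensionality in the simplest possible way, using that an affine function is determined by its trace on a $(n-1)$-dimensional flat piece plus one normal derivative, and the constants in the equivalence of norms depend only on the geometric a-priori data. The only thing to be slightly careful about is ensuring that the flat portion $\Sigma_k$ has a sub-piece lying in $\partial D_k$ (which is given by assumption (3) on the chain of domains) so that the trace of $f = (q_1-q_2)|_{D_k}$ makes sense on $\Sigma_k \cap B_{r_0/4}(P_k)$ from the $D_k$-side; this is exactly the geometric setup recorded in Section \ref{subsec assumption domain}.
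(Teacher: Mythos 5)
Your proposal is correct and follows essentially the same route as the paper: the restriction of $q_1-q_2$ to $D_k$ is affine, its trace on the flat portion $\Sigma_k\cap B_{r_0/4}(P_k)$ together with the normal derivative at $P_k$ determines all $n+1$ coefficients, and Theorem \ref{prop:boundary_det} then controls exactly these boundary quantities. The only cosmetic difference is that you package the coefficient recovery as injectivity plus equivalence of norms on a finite-dimensional space, while the paper makes the same point explicitly by evaluating $q_1-q_2$ at $P_k$ and $P_k+\frac{r_0}{5}e_j$ and taking differences; both yield constants depending only on the a-priori data.
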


\begin{proof}[Proof of Corollary \ref{prop:boundary_det_finite}]
We recall an argument introduced in \cite{AdHGS16}. Let $x\in \overline{D_k}$, let us define
\begin{equation}\label{gamma linear on Dk}
\alpha_k+\beta_k\cdot x = (q_1 -q_2)(x),
\end{equation}
and let us denote by $\{e_j\}_{j=1,\dots, n-1}$ a family of $n-1$ orthonormal vectors, defining the hyperplane containing the flat part of $\Sigma_k$. By computing $q_1 -q_2$ on the points $P_k$, $P_k+\frac{r_0}{5}e_j$, $j=1,\dots, n-1$, taking their differences and applying the following estimate (coming from Theorem \ref {prop:boundary_det})
\begin{align*}
\|q_1-q_2\|_{L^{\infty}(\Sigma_k \cap B_{r_0/4}(P_k) )}  \leq C
(\|\Lambda_{q_1}^{\Sigma_k}-\Lambda_{q_2}^{\Sigma_k}\|_{\ast} + \|q_1-q_2\|_{L^{\infty}(\Omega)})^{1-\eta}{\|\Lambda_{q_1}^{\Sigma_k}-\Lambda_{q_2}^{\Sigma_k}\|^{\eta}_{\ast} },
\end{align*}
we obtain

\begin{eqnarray}
|\alpha_k+\beta_k\cdot P_k| &\leq & C
(\|\Lambda_{q_1}^{\Sigma_k}-\Lambda_{q_2}^{\Sigma_k}\|_{\ast} + \|q_1-q_2\|_{L^{\infty}(\Omega)})^{1-\eta}{\|\Lambda_{q_1}^{\Sigma_k}-\Lambda_{q_2}^{\Sigma_k}\|^{\eta}_{\ast} },\ \\
|\beta_k\cdot e_j| &\leq & C
(\|\Lambda_{q_1}^{\Sigma_k}-\Lambda_{q_2}^{\Sigma_k}\|_{\ast} + \|q_1-q_2\|_{L^{\infty}(\Omega)})^{1-\eta}{\|\Lambda_{q_1}^{\Sigma_k}-\Lambda_{q_2}^{\Sigma_k}\|^{\eta}_{\ast}, }\label{estimates beta along the directions ej}
\end{eqnarray}
for $j=1,\dots, n-1$, where $C>0$ is  a constant depending on the a-priori data only. To estimate $\beta_k$ along the remaining direction $\nu$ and therefore $\alpha_k$, we use the following bound on the normal derivative (coming from Theorem \ref{prop:boundary_det})
\begin{align*}
|\partial_{\nu}(q_1-q_2)(P_k) | \leq C
(\|\Lambda_{q_1}^{\Sigma_k}-\Lambda_{q_2}^{\Sigma_k}\|_{\ast} + \|q_1-q_2\|_{L^{\infty}(\Omega)})^{1-\eta}{\|\Lambda_{q_1}^{\Sigma_k}-\Lambda_{q_2}^{\Sigma_k}\|^{\eta}_{\ast} } \ .
\end{align*}

\end{proof}

\subsection{Runge approximation}\label{Runge approximation}
In order to propagate the information on the potential along the jump interfaces of the potential, we use (a slight variant of) the quantitative Runge approximation result from \cite{RS17c} which constitutes our second main ingredient. Let us recall the notation from \cite{RS17c} for that: we consider domains $\Omega_1, \Omega_2 \subset \R^n$ which are bounded, open and Lipschitz. Further we assume that $\Omega_1 \subset \Omega_2$ such that 
\begin{itemize}
\item[(R)] $\Omega_2 \setminus \overline{\Omega_1}$ is connected with $\overline{\Gamma} \subset \partial \Omega_2 \setminus \overline{\partial \Omega_1}$ being relatively open, non-empty and Lipschitz regular.
\end{itemize}
We remark that in contrast to the set-up in \cite{RS17c} we do not require that $\Omega_1 \Subset \Omega_2$ but allow for $\Omega_1$ and $\Omega_2$ to share a part of its boundary if the restriction of $u$ to the boundary vanishes on this part of the boundary.
Denoting by $L$ the operator from \eqref{eq:L}, we further define the sets  
\begin{align*}
S_1 &:= \{u\in L^2(\Omega_1): \ Lu= 0 \mbox{ in } \Omega_1 \},\\
S_2 &:= \{u\in L^2(\Omega_2): \ Lu = 0 \mbox{ in } \Omega_2 , \ u|_{\partial \Omega_2} \in H^{\frac{1}{2}}_{00}(\Gamma)\}.
\end{align*}
With this notation in place, we recall below the quantitative Runge approximation result from \cite{RS17c}.

\begin{thm}[Theorem 2 in \cite{RS17c}]
\label{thm:Runge_quant}
Let $L$ be the operator from \eqref{eq:L} and let $\Omega_1, \Omega_2,\Gamma$ and  $S_1,S_2$ be as above and assume that both $\Omega_1$ and $\Omega_2$ are domains such that $0$ is not a Dirichlet eigenvalue of $L$.
There exist a parameter $\mu>0$ and a constant $C>1$ (depending on $\Omega_1, \Omega_2, \Gamma, n, \|q\|_{L^{\infty}(\Omega_2)}$) such that for each function $h \in S_1$ and each error threshold $\epsilon\in(0,1)$, there exists a function $u \in S_2$ with
\begin{align}
\label{eq:approx}
\|h-u|_{\Omega_1}\|_{L^2(\Omega_1)} \leq \epsilon \|h\|_{H^{1}(\Omega_1)}, \ \ \ \ 
\|u\|_{H^{1/2}_{00}(\Gamma)} \leq C e^{C \epsilon^{-\mu}} \|h\|_{L^2(\Omega_1)}.
\end{align}
\end{thm}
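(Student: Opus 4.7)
The plan is to adapt the Tikhonov-regularization argument from \cite{RS17c} to the present geometry, where, in contrast to the original setting, $\Omega_1$ and $\Omega_2$ may share part of their boundary. Given $h\in S_1$ and a regularization parameter $\delta>0$ (to be calibrated in terms of $\epsilon$ at the end), I would consider the strictly convex and coercive functional
\begin{align*}
F_\delta(g) := \tfrac{1}{2}\|v_g - h\|^2_{L^2(\Omega_1)} + \tfrac{\delta}{2}\|g\|^2_{H^{1/2}_{00}(\Gamma)}, \qquad g \in H^{1/2}_{00}(\Gamma),
\end{align*}
where $v_g \in H^1(\Omega_2)$ is the unique weak solution of $Lv_g=0$ in $\Omega_2$ with Dirichlet datum $g$ on $\Gamma$ and $0$ on $\partial\Omega_2\setminus\overline{\Gamma}$; well-posedness uses the hypothesis that $0$ is not a Dirichlet eigenvalue of $L$ on $\Omega_2$. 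By the direct method a unique minimizer $g_\delta$ exists, and $F_\delta(g_\delta)\leq F_\delta(0)$ yields the crude bound $\|g_\delta\|_{H^{1/2}_{00}(\Gamma)}\leq \delta^{-1/2}\|h\|_{L^2(\Omega_1)}$.

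I would then compute the Euler--Lagrange equation by introducing the adjoint state $w_\delta \in H^1_0(\Omega_2)$ solving $Lw_\delta = \chi_{\Omega_1}(v_{g_\delta}-h)$ in $\Omega_2$ with $w_\delta|_{\partial\Omega_2}=0$. A short integration by parts, using that admissible variations $v_{\tilde g}$ vanish on $\partial\Omega_2\setminus\overline\Gamma$, identifies the derivative of the $L^2$-part of $F_\delta$ with $-\partial_\nu w_\delta|_\Gamma$ and hence gives
\begin{align*}
\partial_\nu w_\delta|_\Gamma = -\delta\, \mathcal{R}(g_\delta) \quad \text{in } H^{-1/2}_{00}(\Gamma),
\end{align*}
where $\mathcal{R}$ is the Riesz isomorphism on $H^{1/2}_{00}(\Gamma)$. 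Combined with the crude bound for $g_\delta$, this yields the key Cauchy smallness: $w_\delta|_\Gamma\equiv 0$ and $\|\partial_\nu w_\delta|_\Gamma\|_{H^{-1/2}_{00}(\Gamma)} \leq \delta^{1/2}\|h\|_{L^2(\Omega_1)}$.

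The main obstacle and the actual quantitative core of the proof is the next step: propagating this Cauchy smallness from $\Gamma$ through $\Omega_2 \setminus \overline{\Omega_1}$ up to $\partial\Omega_1$. Since $Lw_\delta=0$ there, $w_\delta$ vanishes on all of $\partial\Omega_2$, and hypothesis (R) ensures $\Omega_2 \setminus \overline{\Omega_1}$ is connected with $\overline\Gamma$ a Lipschitz portion of its boundary disjoint from $\partial\Omega_1$, an iteration of a three-ball inequality (or, equivalently, a boundary Carleman estimate) provides a logarithmic propagation of smallness
\begin{align*}
\|w_\delta\|_{H^{1/2}(\partial\Omega_1)} + \|\partial_\nu w_\delta\|_{H^{-1/2}(\partial\Omega_1)} \leq C \bigl(\log \tfrac{1}{\delta}\bigr)^{-\mu'} \|h\|_{L^2(\Omega_1)}
\end{align*}
for some $\mu'>0$ depending only on the geometry and on $\|q\|_{L^\infty(\Omega_2)}$. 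The logarithmic rate here reflects the severe ill-posedness of the underlying Cauchy problem and is what ultimately forces the exponential factor $\exp(C\epsilon^{-\mu})$ in the conclusion. Care is needed at points where $\partial\Omega_1$ may touch $\partial\Omega_2$, but the built-in vanishing of $w_\delta$ on the shared part of the boundary (a mild adaptation of the argument in \cite{RS17c}) handles this.

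Finally, since $v_{g_\delta}$ and $h$ both satisfy $Lu=0$ in $\Omega_1$, Green's identity gives
\begin{align*}
\|v_{g_\delta}-h\|^2_{L^2(\Omega_1)} = \int_{\partial\Omega_1}\bigl(\partial_\nu w_\delta\, (v_{g_\delta}-h) - w_\delta\, \partial_\nu(v_{g_\delta}-h)\bigr),
\end{align*}
which, combined with the logarithmic estimate above and standard $H^1(\Omega_1)$ a priori bounds together with the trace inequality, produces $\|v_{g_\delta}-h\|_{L^2(\Omega_1)}\leq C\bigl(\log\tfrac{1}{\delta}\bigr)^{-\mu'/2}\|h\|_{H^1(\Omega_1)}$. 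Calibrating $\delta=\exp(-C\epsilon^{-2/\mu'})$ and setting $\mu:=2/\mu'$ turns the approximation error into $\leq \epsilon\|h\|_{H^1(\Omega_1)}$ and the crude regularization bound on $g_\delta$ into $\leq C\exp(C\epsilon^{-\mu})\|h\|_{L^2(\Omega_1)}$. Taking $u:=v_{g_\delta}\in S_2$ then concludes the proof of both inequalities in Theorem \ref{thm:Runge_quant}.
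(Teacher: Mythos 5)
Your overall architecture is essentially the one the paper uses (following \cite{RS17c}): a dual state $w_\delta$ solving $Lw_\delta=\chi_{\Omega_1}(v_{g_\delta}-h)$ in $\Omega_2$ with zero Dirichlet data, smallness of its Cauchy data on $\Gamma$ extracted from the regularization, logarithmic propagation of smallness across $\Omega_2\setminus\overline{\Omega_1}$ (here the only adaptation to the shared-boundary geometry is that $w_\delta$ vanishes on $\partial\Omega_1\cap\partial\Omega_2$, exactly as the paper notes), and a final duality step in $\Omega_1$; the only structural difference is that you regularize by Tikhonov minimization where the paper uses the singular value decomposition of the Poisson operator $A$ and a spectral cutoff $R_\alpha$, and these two regularizations are interchangeable here. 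However, your last step has a genuine gap as written. You bound $\|v_{g_\delta}-h\|^2_{L^2(\Omega_1)}$ by the boundary integral $\int_{\partial\Omega_1}\big(\partial_\nu w_\delta\,(v_{g_\delta}-h)-w_\delta\,\partial_\nu(v_{g_\delta}-h)\big)$ and then invoke ``standard $H^1(\Omega_1)$ a priori bounds'' for the traces of $v_{g_\delta}-h$. But the only available bound on the approximant is $\|v_{g_\delta}\|_{H^1(\Omega_2)}\leq C\|g_\delta\|_{H^{1/2}_{00}(\Gamma)}\leq C\delta^{-1/2}\|h\|_{L^2(\Omega_1)}$; there is no $\delta$-uniform $H^1(\Omega_1)$ control of $v_{g_\delta}$ by $\|h\|_{H^1(\Omega_1)}$ (the second inequality of the theorem reflects precisely that the approximant's norm must be allowed to blow up). Hence your boundary integral is only controlled by $C\,(\log(1/\delta))^{-\mu'}\,\delta^{-1/2}\,\|h\|_{L^2(\Omega_1)}^2$ plus harmless terms, and the factor $(\log(1/\delta))^{-\mu'}\delta^{-1/2}$ diverges as $\delta\to 0$, so the claimed bound $C(\log(1/\delta))^{-\mu'/2}\|h\|_{H^1(\Omega_1)}$ does not follow from what you have established.

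The repair is short and is exactly how the duality argument of \cite{RS17c} (and hence the paper) avoids this issue: test your Euler--Lagrange identity with $\tilde g=g_\delta$ to get $(v_{g_\delta}-h,\,v_{g_\delta})_{L^2(\Omega_1)}=-\delta\|g_\delta\|^2_{H^{1/2}_{00}(\Gamma)}\leq 0$, so that $\|v_{g_\delta}-h\|^2_{L^2(\Omega_1)}\leq (h-v_{g_\delta},h)_{L^2(\Omega_1)}=-\int_{\Omega_1}(Lw_\delta)\,h\,dx$; now integrate by parts against $h$ \emph{alone}, so the boundary terms on $\partial\Omega_1$ involve only $h$, $\partial_\nu h$ and the Cauchy data of $w_\delta$, and are bounded by $C\|h\|_{H^1(\Omega_1)}\big(\|w_\delta\|_{H^{1/2}(\partial\Omega_1)}+\|\partial_\nu w_\delta\|_{H^{-1/2}(\partial\Omega_1)}\big)$. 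Two minor points should also be fixed along the way: the source of your dual problem is $v_{g_\delta}-h$, not $h$, so the propagation-of-smallness estimate naturally carries $\|v_{g_\delta}-h\|_{L^2(\Omega_1)}$, which you may replace by $\|h\|_{L^2(\Omega_1)}$ using $F_\delta(g_\delta)\leq F_\delta(0)$; and to make sense of $\partial_\nu h$ on $\partial\Omega_1$ one uses $h\in H^1(\Omega_1)$ (otherwise the statement is vacuous), the same implicit assumption as in the paper. With these corrections your calibration $\delta=\exp(-C\epsilon^{-2/\mu'})$ and the conclusion go through, and your proof becomes a legitimate Tikhonov-flavoured variant of the paper's SVD-based argument.
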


Due to the explained (slight) modification of the statement of Theorem \ref{thm:Runge_quant} with respect to the result from \cite{RS17c} we briefly explain the argument showing that essentially no change in the proof of \cite{RS17c} is necessary.

\begin{proof}[Proof of Theorem \ref{thm:Runge_quant}]
As in \cite{RS17c} the proof follows in two steps. As a first step, we consider the dual equation
\begin{align*}
(-\D + q) w & = h \chi_{\Omega_1} \mbox{ in } \Omega_2,\\
w & = 0 \mbox{ on } \partial \Omega_2,
\end{align*}
where $\chi_{\Omega_1}$ denotes the characteristic function of $\Omega_1$. Then by the same arguments as in \cite{RS17c} we obtain the quantitative unique continuation result
\begin{align*}
\|w \|_{H^1(\Omega_2 \setminus \overline{\Omega_1})}
\leq C \frac{\|h\|_{L^2(\Omega_1)}}{\left(\log\left(C \frac{\|h\|_{L^2(\Omega_1)}}{\|\p_{\nu} w\|_{H^{-\frac{1}{2}}_{00}(\Gamma)}} \right)\right)^{\nu}}.
\end{align*}
As the second step, as in \cite{RS17c} we note that for the set $X:= \overline{S_1}^{L^2(\Omega_1)}$ (which is a Hilbert space), the operator 
\begin{align*}
A: H^{\frac{1}{2}}_{00}(\Gamma) \rightarrow X \subset L^2(\Omega_1), \ g \mapsto u|_{\Omega_1},
\end{align*}
where $u$ is a solution to $Lu = 0$ in $\Omega_2$ such that $u=g$ on $\partial \Omega_2$ in the trace sense, is a compact, injective operator with dense range. The spectral theorem thus yields bases $\{\varphi_j\}_{j\in \N} \subset H^{\frac{1}{2}}_{00}(\Gamma)$ and $\{\psi_j\}_{j\in \N} \subset X$ and singular values $\{\sigma_j\}_{j\in \N}$ such that $A \varphi_j = \sigma_j \psi_j$ and $A^{\ast}\psi_j = \sigma_j \varphi_j$. Given these, we argue by the abstract, quantitative duality argument in exactly the same way as in \cite{RS17c} (where in the estimate for $\|A(R_{\alpha} h) - h \|_{L^2(\Omega_1)}$ we exploit that $w_{\alpha}|_{\partial \Omega_1 \cap \partial \Omega_2}=0$, so that 
$
\|w_{\alpha}\|_{H^{\frac{1}{2}}(\partial \Omega_1)} = \|w_{\alpha}\|_{H^{\frac{1}{2}}(\partial \Omega_1 \setminus \partial \Omega_1)} \leq C \|w_{\alpha}\|_{H^1(\Omega_2 \setminus \overline{\Omega_1})}$). This implies the desired result.
\end{proof}

Using the a-priori information on the potential $q$, we will iterate the boundary recovery result by the aid of the quantitative Runge approximation property and Alessandrini's identity.

\section{Proof of Theorem \ref{thm:Lip_Runge}}

We apply the Runge approximation result from Theorem \ref{thm:Runge_quant} combined with Alessandrini's identity.  Hence let ${\varphi}_i \in H^{\frac{1}{2}}_{00}(\Sigma_{k+1}) ,\  i=1,2$ and consider $u_1,u_2 \in  H^{1}({{\mathcal{U}}_k})$ solutions to 
\begin{eqnarray}\label{Uk}
\left\{ \begin{array}{ll} (-\Delta + q_i(x))u_i=0, &
\textrm{$\textnormal{in}\quad {{\mathcal{U}}_k}$},\\
u_i={\varphi}_i, & \textrm{$\textnormal{on}\quad{\partial {{\mathcal{U}}_k}}$.}
\end{array} \right.
\end{eqnarray}

Then, by Theorem \ref{thm:Runge_quant} there exist solutions $v_1, v_2 \in H^{1}({{\mathcal{U}}_{k-1}})$ of $L_i v_i = 0$ in ${\mathcal{U}}_{k-1}$ with Dirichlet traces $v_i|_{\partial {{\mathcal{U}}_{k-1}} } \in H^{\frac{1}{2}}_{00}(\Sigma_{k})$ and
\begin{align*}
\|v_i-u_i\|_{L^2({\mathcal{U}}_k)} \leq \epsilon \|u_i\|_{H^1({\mathcal{U}}_k)}, \ \|v_i\|_{H^{1/2}_{00}(\Sigma_k)}\leq C e^{C \epsilon^{-\mu}}\|u_i\|_{L^2({\mathcal{U}}_k)}.
\end{align*}
Then, Alessandrini's identity yields the following control:
\begin{align}
\label{eq:Aless}
((\Lambda_{q_1}^{\Sigma_k}-\Lambda_{q_2}^{\Sigma_k})v_1,v_2)
= \int\limits_{{\mathcal{U}}_{k-1}} (q_1-q_2)v_1 v_2 dx
= \int\limits_{{\mathcal{U}}_{k}} (q_1-q_2)v_1 v_2 dx 
+ \int\limits_{D_k} (q_1-q_2)v_1 v_2 dx .
\end{align}
For the first term on the right hand side of \eqref{eq:Aless}, we rewrite:
\begin{align}
\label{eq:complement}
\begin{split}
\int\limits_{{\mathcal{U}}_{k}} (q_1-q_2)v_1 v_2 dx 
&= \int\limits_{{\mathcal{U}}_{k}} (q_1-q_2)(v_1-u_1) v_2 dx 
+ \int\limits_{{\mathcal{U}}_{k}} (q_1-q_2)u_1 (v_2-u_2) dx\\
& \quad + \int\limits_{{\mathcal{U}}_{k}} (q_1-q_2)u_1 u_2 dx\\
&= \int\limits_{{\mathcal{U}}_{k}} (q_1-q_2)(v_1-u_1) v_2 dx 
+ \int\limits_{{\mathcal{U}}_{k}} (q_1-q_2)u_1 (v_2-u_2) dx\\
& \quad + ((\Lambda_{q_1}^{\Sigma_{k+1}}-\Lambda_{q_2}^{\Sigma_{k+1}})u_1,u_2).
\end{split}
\end{align}
Next we combine the two identities \eqref{eq:Aless} and \eqref{eq:complement} obtaining
\begin{align*}
&|((\Lambda_{q_1}^{\Sigma_{k+1}}-\Lambda_{q_2}^{\Sigma_{k+1}})\varphi_1,\varphi_2)|\\
&\leq |((\Lambda_{q_1}^{\Sigma_k}-\Lambda_{q_2}^{\Sigma_k})v_1,v_2)|\\
& \quad + \|q_1-q_2\|_{L^{\infty}({\mathcal{U}}_{k})}(\|u_1-v_1\|_{L^2({\mathcal{U}}_{k})}\|v_2\|_{L^2({\mathcal{U}}_{k})} + \|u_1\|_{L^2({\mathcal{U}}_{k})}\|u_2-v_2\|_{L^2({\mathcal{U}}_{k})} )\\
& \quad  + \|q_1-q_2\|_{L^{\infty}(D_k)} \|v_1\|_{L^2(D_k)} \|v_2\|_{L^2(D_k)}.
\end{align*}
Morover the above inequality, the estimate
\begin{align*}
\|v_2\|_{L^2({\mathcal{U}}_{k})} \leq \|u_2\|_{L^2({\mathcal{U}}_{k})} + \|u_2-v_2\|_{L^2({\mathcal{U}}_{k})},
\end{align*}
and standard bounds for elliptic equations lead to 
\begin{align*}
&|((\Lambda_{q_1}^{\Sigma_{k+1}}-\Lambda_{q_2}^{\Sigma_{k+1}})\varphi_1,\varphi_2)| \\
 &\leq  \|\Lambda_{q_1}^{\Sigma_k}-\Lambda_{q_2}^{\Sigma_k}\|_{\ast}\|v_1\|_{H_{00}^{1/2}(\Sigma_k)}\|v_2\|_{H_{00}^{1/2}(\Sigma_k)} \\
&\quad  + \|q_1-q_2\|_{L^{\infty}({\mathcal{U}}_{k})}(\|u_1-v_1\|_{L^2({\mathcal{U}}_{k})}\|u_2\|_{L^2({\mathcal{U}}_{k})} \\
& \qquad  +\|u_1-v_1\|_{L^2({\mathcal{U}}_{k})}\|u_2-v_2\|_{L^2({\mathcal{U}}_{k})}+ \|u_1\|_{L^2({\mathcal{U}}_{k})}\|u_2-v_2\|_{L^2({\mathcal{U}}_{k})} )\\
& \quad  + \|q_1-q_2\|_{L^{\infty}(D_k)} \|v_1\|_{L^2(D_k)} \|v_2\|_{L^2(D_k)}\\
&\leq \|\Lambda_{q_1}^{\Sigma_k}-\Lambda_{q_2}^{\Sigma_k}\|_{\ast}\|v_1\|_{H_{00}^{1/2}(\Sigma_k)}\|v_2\|_{H_{00}^{1/2}(\Sigma_k)} \\
& \quad +C\|q_1-q_2\|_{L^{\infty}({\mathcal{U}}_{k})}(\|u_1-v_1\|_{L^2({\mathcal{U}}_{k})}\|u_2\|_{H_{00}^{1/2}({\Sigma}_{k+1})} )\\
& \quad +C\|q_1-q_2\|_{L^{\infty}({\mathcal{U}}_{k})}(\|u_1-v_1\|_{L^2({\mathcal{U}}_{k})}\|u_2-v_2\|_{L^2({\mathcal{U}}_{k})}+ \|u_1\|_{H_{00}^{1/2}({\Sigma}_{k+1})}\|u_2-v_2\|_{L^2({\mathcal{U}}_{k})} )\\
& \quad +C \|q_1-q_2\|_{L^{\infty}(D_k)} \|v_1\|_{H_{00}^{1/2}(\Sigma_k)} \|v_2\|_{H_{00}^{1/2}(\Sigma_k)}.
\end{align*}
Invoking the Runge approximation property and Corollary \ref{prop:boundary_det_finite} then allows us to further bound 
\begin{align*}
|((\Lambda_{q_1}^{\Sigma_{k+1}}-\Lambda_{q_2}^{\Sigma_{k+1}})\varphi_1,\varphi_2)|
&\leq \|\Lambda_{q_1}^{\Sigma_k}-\Lambda_{q_2}^{\Sigma_k}\|_{\ast}\|v_1\|_{H_{00}^{1/2}(\Sigma_k)}\|v_2\|_{H_{00}^{1/2}(\Sigma_k)}\\
& \quad +C \epsilon \|q_1-q_2\|_{L^{\infty}{\mathcal{U}}_{k})} \|u_1\|_{H_{00}^{1/2}(\Sigma_{k+1})} \|u_2\|_{H_{00}^{1/2}({\Sigma}_{k+1})}\\
& \quad  + C e^{C \epsilon^{-\mu}}(\|\Lambda_{q_1}^{\Sigma_k}-\Lambda_{q_2}^{\Sigma_k}\|_{\ast} + E)\left( \frac{\|\Lambda_{q_1}^{\Sigma_k}-\Lambda_{q_2}^{\Sigma_k}\|_{\ast} }{\|\Lambda_{q_1}^{\Sigma_k}-\Lambda_{q_2}^{\Sigma_k}\|_{\ast} 
+E }\right)^{\eta}\times\\
& \quad \times \|u_1\|_{H_{00}^{1/2}(\Sigma_{k+1})} \|u_2\|_{H_{00}^{1/2}(\Sigma_{k+1})}\\
&\leq C e^{C \epsilon^{-\mu}}\|\Lambda_{q_1}^{\Sigma_k}-\Lambda_{q_2}^{\Sigma_k}\|_{\ast}\|u_1\|_{L^2(\mathcal{U}_{k})}\|u_2\|_{L^2(\mathcal{U}_k)}\\
& \quad +C \epsilon \|q_1-q_2\|_{L^{\infty}({\mathcal{U}}_{k})} \|u_1\|_{H_{00}^{1/2}(\Sigma_{k+1})} \|u_2\|_{H_{00}^{1/2}({\Sigma}_{k+1})}\\
& \quad  + C e^{C \epsilon^{-\mu}}(\|\Lambda_{q_1}^{\Sigma_k}-\Lambda_{q_2}^{\Sigma_k}\|_{\ast} + E)\left( \frac{\|\Lambda_{q_1}^{\Sigma_k}-\Lambda_{q_2}^{\Sigma_k}\|_{\ast} }{\|\Lambda_{q_1}^{\Sigma_k}-\Lambda_{q_2}^{\Sigma_k}\|_{\ast} 
+E }\right)^{\eta}\times\\
& \quad \times \|u_1\|_{H_{00}^{1/2}(\Sigma_{k+1})} \|u_2\|_{H_{00}^{1/2}(\Sigma_{k+1})}.
\end{align*}
Recalling the boundary conditions for the functions $u_j$ from \eqref{Uk}, using energy estimates and estimating $\|q_1 - q_2\|_{L^{\infty}(\mathcal{U}_k)} \leq E$, we further arrive at
\begin{align*}
|((\Lambda_{q_1}^{\Sigma_{k+1}}-\Lambda_{q_2}^{\Sigma_{k+1}})\varphi_1,\varphi_2)|
&\leq  C \epsilon E (\|\varphi_1\|_{H_{00}^{1/2}(\Sigma_{k+1})} \|\varphi_2\|_{H_{00}^{1/2}(\Sigma_{k+1})})\\
& \quad  +\tilde{C} e^{C \epsilon^{-\mu}}(\|\Lambda_{q_1}^{\Sigma_k}-\Lambda_{q_2}^{\Sigma_k}\|_{\ast} + E)\left( \frac{\|\Lambda_{q_1}^{\Sigma}-\Lambda_{q_2}^{\Sigma}\|_{\ast} }{\|\Lambda_{q_1}^{\Sigma}-\Lambda_{q_2}^{\Sigma}\|_{\ast} 
+ E }\right)^{\eta}\times\\
& \quad \times (\|\varphi_1\|_{H_{00}^{1/2}(\Sigma_{k+1})} \|\varphi_2\|_{H_{00}^{1/2}(\Sigma_{k+1})}).
\end{align*}
Taking the sup over all $\varphi_1, \varphi_2 \in H^{\frac{1}{2}}_{00}(\Sigma_{k+1})$ then implies that

\begin{eqnarray}\label{Runge+Bdry}
\|\Lambda_{q_1}^{\Sigma_{k+1}}-\Lambda_{q_2}^{\Sigma_{k+1}}\|_*\le  C e^{C \epsilon^{-\mu}}\left(\|\Lambda_{q_1}^{\Sigma_{k}}-\Lambda_{q_2}^{\Sigma_{k}}\|_* +E\right)\left(\frac{\left(\|\Lambda_{q_1}^{\Sigma_{k}}-\Lambda_{q_2}^{\Sigma_{k}}\|_* \right)}{\|\Lambda_{q_1}^{\Sigma_{k}}-\Lambda_{q_2}^{\Sigma_{k}}\|_* +E}\right)^{\eta} +C\epsilon E .
\end{eqnarray}

\begin{rmk}
\label{rmk:stabmap}
Similarly as in \cite[Remark 2]{RS17c} we observe that, under the hypothesis of Theorem \ref{thm:Lip_Runge},  the following logarithmic dependence of the local Dirichlet to Neumann map over $\Sigma_{k+1}$ from the one over $\Sigma_k$ 
\begin{eqnarray}
\|\Lambda_{q_1}^{\Sigma_{k+1}}-\Lambda_{q_2}^{\Sigma_{k+1}}\|_*\le C |\log(\|\Lambda_{q_1}^{\Sigma_{k}}-\Lambda_{q_2}^{\Sigma_{k}}\|^{-\frac{\eta}{2}}_* )|^{-\frac{1}{\mu}}.
\end{eqnarray}
can be derived. The proof relies on an optimization argument over $\epsilon$ for the right hand side of \eqref{Runge+Bdry}.
See also \cite{AKi12} where the authors provide a quite general method to obtain a continuous dependence of a \emph{global} Dirichlet to Neumann map from a \emph{local} one on a larger domain. 
\end{rmk} 


We introduce the notation 
\begin{eqnarray}
\delta_j &= \|\Lambda_{q_1}^{\Sigma_j}-\Lambda_{q_2}^{\Sigma_j}\|_{\ast}
\end{eqnarray}
for $j\in \{1, \dots, K\}$. 
We start from the last domain of the chain, namely $D_K$, where the maximum is achieved. 
By Corollary \ref{prop:boundary_det_finite} we have 
\begin{eqnarray}\label{K}
E=\|q_1-q_2\|_{L^{{\infty}}(D_K)}\le C (\delta_K + E)\left(\frac{\delta_K}{\delta_K+E} \right)^{\eta} \ . 
\end{eqnarray}
Now we distinguish two cases: 
\begin{description}
\item [Ka)]  $E < \delta_K$,
\item [Kb)]  $E\ge \delta_K$.
\end{description}
If case [Kb)] occurs, then we notice that \eqref{K} leads to 
\begin{eqnarray}\label{K1}
E\le 2CE\left(\frac{\delta_K}{\delta_K+E} \right)^{\eta} \ .
\end{eqnarray}
which in turns gives 
\begin{eqnarray}\label{K2}
\left(\frac{1}{2C}\right)^{\frac{1}{\eta}}\le \left(\frac{\delta_K}{\delta_K+E} \right)\ .
\end{eqnarray}
From the latter, by rearranging, we deduce that 
\begin{eqnarray}\label{K3}
E\le \left({2C}\right)^{\frac{1}{\eta}} \delta_K \ .
\end{eqnarray}
Now, by the estimates \eqref{Runge+Bdry} and \eqref{K3} we have that 
\begin{align*}
E \le C_K C e^{C \epsilon^{-\mu}}(\delta_{K-1} +E)\left( \frac{\delta_{K-1}}{\delta_{K-1}+E} \right)^{\eta} + C_KCE\epsilon, 
\end{align*}
where $C_K=\left({2C}\right)^{\frac{1}{\eta}} $. Choosing $\epsilon= \frac{1}{2 C_K C} $, we can absorb the last term in the above inequality 
\begin{align}
\label{eq:stepK}
E \le c_K(\delta_{K-1} +E)\left( \frac{\delta_{K-1}}{\delta_{K-1}+E} \right)^{\eta}, 
\end{align} 
where $c_K=2C_K C e^{C [{(2C_KC)}]^{\mu}}$.
If the case [Ka)] occurs, we directly obtain \eqref{K3} and \eqref{eq:stepK}.
Dealing with the estimate \eqref{eq:stepK} as above we may again  distinguish two cases 
\begin{description}
\item [K-1a)]  $E < \delta_{K-1}$,
\item [K-1b)]  $E\ge \delta_{K-1}$.
\end{description}
Arguing analogously as above, we conclude that 
\begin{eqnarray}\label{K4}
E\le \left({2c_K}\right)^{\frac{1}{\eta}} \delta_{K-1} \ .
\end{eqnarray}
Now from estimate \eqref{K4} and from \eqref{Runge+Bdry} we in turn observe that
\begin{eqnarray}
E \le C_{K-1} C e^{C \epsilon^{-\mu}}(\delta_{K-2} +E)\left( \frac{\delta_{K-2}}{\delta_{K-2}+E} \right)^{\eta} + C_{K-1}CE\epsilon, 
\end{eqnarray}
where $C_{K-1}=\left({2c_K}\right)^{\frac{1}{\eta}} $.
Choosing $\epsilon= \frac{1}{2 C_{K-1} C} $ we can absorb the last term in the above inequality 
\begin{eqnarray}
E \le c_{K-1}(\delta_{K-2} +E)\left( \frac{\delta_{K-2}}{\delta_{K-1}+E} \right)^{\eta}, 
\end{eqnarray} 
where $c_{K-1}=2C_{K-1} C e^{C [{(2C_{K-1}C)}]^{\mu}}$.
Iterating such an argument we end up with the  estimate 
\begin{eqnarray}
E \le c_{2}(\delta_{1} +E)\left( \frac{\delta_{1}}{\delta_{1}+E} \right)^{\eta}, 
\end{eqnarray} 
which, by the argument above, leads to 
\begin{eqnarray}
E\le (2c_2)^{\frac{1}{\eta}}\delta_1. 
\end{eqnarray}
This concludes the proof. 


\vskip 0.5cm 

{\bf{Acknowledgement}}.  The work of  ES was performed under the PRIN grant No. 201758MTR2-007.  ES has also been supported by the Individual Funding for Basic Research (FFABR) granted by MIUR.

\end{document}